\newtheorem{teo}{Theorem}[section]
\newtheorem{cor}{Corollary}[section]
\newtheorem{prop}{Proposition}[section]
\newcommand{\R}{\mathbb{R}}
\newcommand{\ve}{\varepsilon}
\newcommand{\wspi}{\tilde W^{s,p}_0(I)}
\newcommand{\wsp}{W^{s,p}(\R)}
\title{A note on the Moser-Trudinger inequality in Sobolev-Slobodeckij spaces in dimension one}
\author{Stefano Iula\thanks{The author is supported by the Swiss National Science Foundation project nr. PP00P2-144669.}\\ {\small Universit\"at Basel}\\ {\small \texttt{stefano.iula@unibas.ch} }}
\date{}
\begin{document}
\maketitle

\begin{abstract}
We discuss some recent results by Parini and Ruf on a Moser-Trudinger type inequality in the setting of Sobolev-Slobodeckij spaces in dimension one. We push further their analysis considering the inequality on the whole $\R$ and we give an answer to one of their open questions.
\end{abstract}
%\tableofcontents

%%%%%%%%%%%%%%%%%
\section{Introduction}

A classical result in analysis states that, if $\Omega\subset\R^n$ is an open set with finite measure $|\Omega|$ and Lipschitz boundary, $k$ is a positive integer with $k<n$, and $p\in[1,\frac kn)$, then the Sobolev space $W^{k,p}_0(\Omega)$ embeds continuously in $L^{\frac{np}{n-kp}}(\Omega)$. This results doesn't hold for the critical case $p=\frac nk$, that is $W^{k,\frac nk}_0(\Omega)$ doesn't embed in $L^\infty(\Omega)$. On the other hand Trudinger \cite{Tru}, Pohozaev \cite{Poho}, Yudovich \cite{Yud} and others found that, at least in the case $k=1$, functions in $W^{1,n}_0(\Omega)$ enjoy summability of exponential type. Namely 
$$ W^{1,n}_0(\Omega) \subset \left\{u\in L^1(\Omega)\colon \int_\Omega e^{\beta |u|^{\frac{n}{n-1}}}\, dx<+\infty\right\}$$
for any $\beta<+\infty$. Moser \cite{Mos} sharpened this embedding and determined the optimal exponent $\alpha_n$ such that
\begin{equation}\label{eq:moser}
\sup_{u\in W^{1,n}_0(\Omega),\|\nabla u\|_{L^n(\Omega)}\leq 1}\int_\Omega e^{\alpha_n|u|^{\frac{n}{n-1}}}\, dx<C|\Omega|, \quad\alpha_n:=n\omega_{n-1}^{\frac{1}{n-1}}.
\end{equation}
Here, $\omega_{n-1}$ is the volume of the unit sphere in $\R^n$. In particular the exponent $\alpha_n$ is sharp in the sense that 

\begin{equation}\notag
\sup_{u\in W^{1,n}_0(\Omega),\|\nabla u\|_{L^n(\Omega)}\leq 1}\int_\Omega e^{\alpha|u|^{\frac{n}{n-1}}}\, dx=+\infty
\end{equation}
for any $\alpha>\alpha_n$. Moreover, the supremum in \eqref{eq:moser} becomes infinite as soon as we slightly modify the integrand, namely
\begin{equation}\label{eq:sharpf}
\sup_{u\in W^{1,n}_0(\Omega),\|\nabla u\|_{L^n(\Omega)}\leq 1}\int_\Omega f(|u|)e^{\alpha_n|u|^{\frac{n}{n-1}}}\, dx=+\infty
\end{equation}
for any measurable function $f\colon\R^+\to\R^+$ such that $\lim_{t\to+\infty}f(t)=\infty$. This can be proved, for instance, using the same test functions defined in \cite{Mos}. In \cite{adams} Adams, exploiting Riesz potentials, extended Moser's result to higher order Sobolev spaces $W^{k,p}_0(\Omega)$, $k>1$, $p=\frac{n}{k}$. 

\medskip

In the present work, we are interested in generalizations of \eqref{eq:moser} that concern Sobolev spaces of fractional orders. The usual approach is to consider Bessel potential spaces $H^{s,p}$. In this setting, sharp versions of \eqref{eq:moser} are proven both in the cases of bounded and unbounded domains of $\R^n$, $n\geq 1$ (see \cite{IMM}, \cite{marfrac} and \cite{hyder}).

\medskip

Here, we focus our attention on the case (in general different from the one of Bessel potential spaces) of Sobolev Slobodeckij spaces (see definitions below), which has been recently proposed, together with some open questions, by Parini and Ruf. In \cite{Par-Ruf} they considered $\Omega\subset\R^n$ to be a bounded and open domain, $n\geq 2$ and $sp=n$ and they were able to prove the existence of $\alpha_*>0$ such that the corresponding version of inequality \eqref{eq:moser} is satisfied for any $\alpha\in(0,\alpha_*)$ (see also \cite{Peetre}). Even though the result is not sharp, in the sense that the value of the optimal exponent is not yet known, an explicit upper bound for the optimal exponent $\alpha^*$ is given.

\medskip

As a first step, we extend the results in \cite{Par-Ruf} to the case $n=1$. For any $s\in(0,1)$ and $p>1$, the Sobolev-Slobodeckij space $\wsp$ is defined as 
$$ \wsp:=\left\{u\in L^p(\R)\colon [u]_{\wsp}<+\infty\right\}$$
where $[u]_{\wsp}$ is the Gagliardo seminorm defined by
\begin{equation}\label{eq:gagliardo}
[u]_{\wsp}:=\left(\int_{\R}\int_{\R}\frac{|u(x)-u(y)|^p}{|x-y|^2}\, dx\, dy\right)^{\frac 1p}.
\end{equation}
We will often write $[\cdot]:=[\cdot]_{\wsp}$. The space $\wsp$ is a Banach space with respect to the norm
\begin{equation}\label{eq:norm}
||u||_{\wsp}:=\left(||u||^p_{L^p(\R)}+[u]^p_{\wsp}\right)^{\frac 1p}.
\end{equation}
Let $I$ be an open interval in $\R$. We define the space $\wspi$ as the closure of $\left(C^\infty_0(I),\|u\|_{\wsp}\right)$. An equivalent definition for $\wspi$ can be obtained taking the completion of $C^\infty_0(I)$ with respect to the seminorm $[u]_{\wsp}$ (see \cite[Remark 2.5]{Brasco-Lindgren-Parini}).

\medskip

With a mild adaptation of the techniques used in \cite{Par-Ruf}, we are able to prove that their result holds also in dimension one.

\begin{teo}\label{inequality}
Let $s\in(0,1)$ and $p>1$ be such that $sp=1$. There exists $\alpha_*=\alpha_*(s)>0$ such that for all $\alpha\in[0,\alpha_*)$ it holds

\begin{equation}\label{eq:MT}
\sup_{u\in\wspi, [u]_{\wsp}\leq 1} \int_I e^{\alpha |u|^{\frac{1}{1-s}}}\, dx<\infty.
\end{equation}
Moreover, there exists $\alpha^*=\alpha^*(s):=\gamma_s^{\frac{s}{1-s}}$ such that the supremum in \eqref{eq:MT} is infinite for any $\alpha\in(\alpha^*,+\infty)$.
\end{teo}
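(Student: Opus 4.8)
The plan is to establish the two assertions of the theorem separately; I assume throughout that $|I|<\infty$, since otherwise the supremum in \eqref{eq:MT} is infinite for every $\alpha$ (while the second assertion is then vacuous). \textbf{For the subcritical bound}, the engine is a Sobolev estimate with explicit control of the constant in terms of the integrability exponent: there should be $C=C(s)>0$ with
\[
\|u\|_{L^q(I)}\ \le\ C\,q^{\,1-s}\,[u]_{\wsp}\qquad\text{for all }u\in\wspi\text{ and all }q\ge1 .
\]
This is the one-dimensional counterpart of the embedding estimate used in \cite{Par-Ruf}, and I would obtain it by fixing a subcritical order $\sigma<s$ (so $\sigma p<1$), applying the fractional Sobolev embedding $\wsp\hookrightarrow L^{p/(1-\sigma p)}(\R)$, bounding $[u]_{W^{\sigma,p}(\R)}\lesssim[u]_{\wsp}$ for $u$ supported in the bounded set $I$ (splitting the Gagliardo kernel into the regions $|x-y|\le1$ and $|x-y|>1$, and using the fractional Poincar\'e inequality on $\wspi$ recalled above for the far part), and finally choosing $\sigma=\tfrac1p-\tfrac1q$ while tracking how the Sobolev constant blows up as $\sigma p\to1$; the power $q^{1-s}$ is exactly the one dictated by the criticality $sp=1$. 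Granting this, for $[u]_{\wsp}\le1$ I would expand $e^{\alpha|u|^{1/(1-s)}}=\sum_{k\ge0}\tfrac{\alpha^{k}}{k!}\,|u|^{k/(1-s)}$, integrate term by term by Tonelli, and bound the $k$-th term ($k\ge1$) by
\[
\frac{\alpha^{k}}{k!}\,\|u\|_{L^{k/(1-s)}(I)}^{k/(1-s)}\ \le\ \frac{\alpha^{k}}{k!}\,C^{k/(1-s)}\Big(\frac{k}{1-s}\Big)^{k}\ \le\ \Big(\frac{\alpha\,e\,C^{1/(1-s)}}{1-s}\Big)^{k},
\]
using $k!\ge(k/e)^{k}$; together with the $k=0$ term $|I|<\infty$ this makes the series converge — uniformly over $\{[u]_{\wsp}\le1\}$ — as soon as $\alpha<\alpha_{*}:=(1-s)\,e^{-1}\,C(s)^{-1/(1-s)}$, so that the supremum in \eqref{eq:MT} is finite for such $\alpha$.

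\textbf{For the supercritical part}, note first that $[\,\cdot\,]_{\wsp}$ is invariant under translations and under dilations (the latter precisely because $sp=1$), so it suffices to produce admissible test functions supported in a fixed compact subinterval of $I$. I would take the one-dimensional Moser functions: for small $\ve>0$ let $u_\ve\in\wspi$ be the even function equal to $(\log\tfrac1\ve)^{1-s}$ on $[-\ve,\ve]$, to $(\log\tfrac1\ve)^{-s}\log\tfrac1{|x|}$ on $\{\ve\le|x|\le1\}$, and to $0$ on $\{|x|\ge1\}$. The decisive point — which may also be regarded as the definition of $\gamma_s$ — is that
\[
[u_\ve]_{\wsp}^{\,p}\ \longrightarrow\ \gamma_s\in(0,\infty)\qquad\text{as }\ve\to0 .
\]
Granting this, set $v_\ve:=u_\ve/[u_\ve]_{\wsp}$, so $[v_\ve]_{\wsp}=1$ and $v_\ve\equiv(\log\tfrac1\ve)^{1-s}/[u_\ve]_{\wsp}$ on $[-\ve,\ve]$; since $1/p=s$,
\[
\int_{I}e^{\alpha|v_\ve|^{1/(1-s)}}\,dx\ \ge\ 2\ve\,\exp\!\Big(\frac{\alpha}{[u_\ve]_{\wsp}^{1/(1-s)}}\,\log\tfrac1\ve\Big)\ =\ 2\,\ve^{\,1-\alpha/[u_\ve]_{\wsp}^{1/(1-s)}} ,
\]
and since $[u_\ve]_{\wsp}^{1/(1-s)}\to\gamma_s^{1/(p(1-s))}=\gamma_s^{s/(1-s)}=\alpha^{*}$, the right-hand side tends to $+\infty$ along $\ve\to0$ for every $\alpha>\alpha^{*}$, so that the supremum in \eqref{eq:MT} is infinite for such $\alpha$.

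\textbf{The main obstacle} is the asymptotics $[u_\ve]_{\wsp}^{\,p}\to\gamma_s$. I would split the double integral defining $[u_\ve]_{\wsp}^{p}$ according to whether each of $x,y$ lies in the plateau $P=\{|x|\le\ve\}$, the logarithmic corona $A=\{\ve<|x|<1\}$, or the exterior $E=\{|x|\ge1\}$. All blocks other than $A\times A$ are either identically zero ($P\times P$ and $E\times E$, where $u_\ve$ is constant) or $o(1)$ as $\ve\to0$ (those touching $P$ or $E$), by elementary estimates. On $A\times A$ one has $|u_\ve(x)-u_\ve(y)|^{p}=(\log\tfrac1\ve)^{-sp}\,\big|\log\tfrac{|y|}{|x|}\big|^{p}=(\log\tfrac1\ve)^{-1}\,\big|\log\tfrac{|y|}{|x|}\big|^{p}$, and the change of variables $|x|=|y|\,t$ followed by integration in $|y|$ shows that $\iint_{A\times A}\big|\log\tfrac{|y|}{|x|}\big|^{p}\,|x-y|^{-2}\,dx\,dy=\gamma_s\,(\log\tfrac1\ve)\,(1+o(1))$ for an explicit finite $\gamma_s>0$ (of the shape $\int_0^1(-\log t)^{p}\big((1-t)^{-2}+(1+t)^{-2}\big)\,dt$ up to a universal constant); the prefactor $(\log\tfrac1\ve)^{-1}$ is then absorbed, giving $[u_\ve]_{\wsp}^{p}\to\gamma_s$. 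This is exactly where the computation of \cite{Par-Ruf} must be transcribed to $n=1$ — somewhat easier than their radial case, the Jacobian $r^{n-1}$ being absent — and everything else in the argument is routine.
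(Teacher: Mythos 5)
Your proposal is correct and follows essentially the same route as the paper: the subcritical bound comes from the embedding $\|u\|_{L^q}\lesssim q^{1-s}[u]_{\wsp}$ (which the paper quotes from Peetre, Theorem 9.1, while you sketch a derivation via subcritical orders) combined with the power-series expansion and Stirling's formula; the supercritical bound uses the same one-dimensional Moser functions and the same block decomposition of the Gagliardo seminorm to show $[u_\ve]^p_{\wsp}\to\gamma_s$, with the dominant $A\times A$ contribution reducing, exactly as in the paper, to $8\int_1^\infty|\log x|^p(x^2+1)(x^2-1)^{-2}\,dx$ (your integral $4\int_0^1|\log t|^p\big((1-t)^{-2}+(1+t)^{-2}\big)\,dt$ is the same quantity under $t=1/x$). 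The only missing detail, which you correctly flag as routine, is the integration-by-parts argument the paper uses to show that the blocks touching the plateau and the exterior vanish in the limit.
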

It is worth to remark that, as already pointed out in \cite{Par-Ruf}, the exponent $\alpha^*(\frac 12)$ is equal to $2\pi^2$ and it coincides, up to a normalization constant, with the optimal exponent $\pi$ determined in \cite{IMM} in the setting of Bessel potential spaces. 

\medskip

We move now to the case $I=\R$, pushing further the analysis of \cite{Par-Ruf}. An inequality of the form \eqref{eq:MT} cannot hold if we don't consider the full $\wsp$-norm, i.e. we take into account also the term $\|u\|_{L^p(\R)}$. This has been done by Ruf \cite{Ruf-H1} in the case of $H^{1,2}(\R^2)$, see also \cite{IMM}, \cite{hyder} for the case of Bessel potential spaces. We define
\begin{equation}\label{varphi}
\Phi(t):=e^t-\sum_{k=0}^{\lceil p-2\rceil}\frac{t^k}{k!},
\end{equation}
where $\lceil p-2\rceil$ is the smallest integer greater than, or equal to $p-2$.
\begin{teo}\label{inequalityR}
Let $s\in(0,1)$ and $p>1$ be such that $sp=1$. There exists $\alpha_*=\alpha_*(s)>0$ such that for all $\alpha\in[0,\alpha_*)$ it holds

\begin{equation}\label{eq:MTR}
\sup_{u\in\wsp, ||u||_{\wsp}\leq 1} \int_\R \Phi(\alpha|u|^{\frac{1}{1-s}})\, dx<\infty.
\end{equation}

Moreover the supremum in \eqref{eq:MT} is infinite for any $\alpha\in(\alpha^*,+\infty)$, where $\alpha^*$ is as in Theorem \ref{inequality}%=\alpha^*(s):={\color{red}{\gamma_s^{\frac{s}{1-s}}}}$. 
\end{teo}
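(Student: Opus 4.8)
The plan is to split the statement into two parts: the positive statement (the supremum in \eqref{eq:MTR} is finite for small $\alpha$) and the negative statement (the supremum is infinite for $\alpha>\alpha^*$). The negative part should be immediate: if $u$ is supported in a bounded interval $I$ and $[u]_{\wsp}\le 1$, then rescaling/concentrating the test functions from the proof of Theorem \ref{inequality} (which already produce divergence of $\int_I e^{\alpha|u|^{1/(1-s)}}dx$ for $\alpha>\alpha^*$) gives functions in $\wsp$ whose full norm stays bounded — or, after normalizing the full norm to $1$, we lose only a controllable multiplicative factor in the exponent, which can be absorbed by taking $\alpha$ slightly larger but still below any fixed $\alpha''>\alpha^*$. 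Since on a bounded set $\Phi(\alpha|u|^{1/(1-s)})$ and $e^{\alpha|u|^{1/(1-s)}}$ differ by the integral of a polynomial in $|u|$, which is finite, divergence of one forces divergence of the other. So the second half reduces to Theorem \ref{inequality}.

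For the positive part, the plan is the standard splitting-into-a-local-and-a-tail argument used for Moser–Trudinger inequalities on unbounded domains (as in Ruf \cite{Ruf-H1} and \cite{IMM}, \cite{hyder}). Fix $u$ with $\|u\|_{\wsp}\le1$, so in particular $\|u\|_{L^p(\R)}\le1$ and $[u]_{\wsp}\le1$. First I would record the elementary inequality $\Phi(t)\le C_p\, t^{\lceil p-1\rceil}\, e^{t}$ for $t\ge 0$ (valid because the series defining $\Phi$ starts at index $\lceil p-2\rceil+1\ge p-1$), so that
\begin{equation}\notag
\int_\R \Phi(\alpha|u|^{\frac1{1-s}})\,dx \le C_p\,\alpha^{\lceil p-1\rceil}\int_\R |u|^{\frac{\lceil p-1\rceil}{1-s}} e^{\alpha|u|^{\frac1{1-s}}}\,dx.
\end{equation}
Next I would decompose $\R$ into the region $\{|u|\le 1\}$ and $\{|u|>1\}$. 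On $\{|u|\le1\}$ the exponential is bounded and the integrand is dominated by $C|u|^{\frac{\lceil p-1\rceil}{1-s}}\le C|u|^p$ (since the exponent $\frac{\lceil p-1\rceil}{1-s}=s(\lceil p-1\rceil)p\ge p$ when $\lceil p-1\rceil\ge 1/s$... here one must check $\frac{\lceil p-1\rceil}{1-s}\ge p$, which holds because $\lceil p-1\rceil\ge p-1$ and $\frac{p-1}{1-s}=\frac{p-1}{1-1/p}=p$), hence integrable against $\|u\|_{L^p}^p\le1$. The real work is on $\Omega_u:=\{|u|>1\}$, which by Chebyshev has finite measure $|\Omega_u|\le\|u\|_{L^p}^p\le1$.

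On $\Omega_u$ I would like to invoke the bounded-domain inequality \eqref{eq:MT} of Theorem \ref{inequality}, but $u$ need not vanish on $\partial\Omega_u$, so $u\notin\wspi$ in general. The remedy is to work with $v:=(|u|-1)_+\in\wsp$: $v$ is supported in $\Omega_u$, $v$ vanishes continuously at the boundary in the appropriate sense, and a truncation lemma gives $[v]_{\wsp}\le[u]_{\wsp}\le1$ (truncations do not increase the Gagliardo seminorm; I would cite or reprove this, as in \cite{Brasco-Lindgren-Parini}). Moreover $|u|\le v+1$ on $\Omega_u$, so $|u|^{1/(1-s)}\le (v+1)^{1/(1-s)}\le (1+\delta)v^{1/(1-s)}+C_\delta$ for any $\delta>0$ (Young-type inequality). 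Choosing $\delta$ small and $\alpha<\alpha_*$ so that $\alpha(1+\delta)<\alpha_*$, and absorbing the polynomial prefactor $|u|^{\lceil p-1\rceil/(1-s)}$ into the exponential by another Young inequality (it costs only another $(1+\delta)$ factor and a constant), the integral over $\Omega_u$ is bounded by $C\int_{\Omega_u} e^{\alpha(1+\delta)^2 v^{1/(1-s)}}\,dx \le C\,e^{\alpha C_\delta}\sup_{[v]\le1}\int e^{\alpha' v^{1/(1-s)}}dx$, which is finite by Theorem \ref{inequality} applied on a bounded interval containing $\Omega_u$ — except that $\Omega_u$ is an arbitrary bounded open set, not an interval, so I would note that \eqref{eq:MT} holds for any bounded open set (its proof, or a monotonicity/extension argument, gives this; alternatively enclose $\Omega_u$ in an interval $I'$ and extend $v$ by its $\wspi[I']$-closure, which does not increase the seminorm).

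The main obstacle is exactly this last transition: converting the ``interior'' piece into an honest application of the bounded-domain inequality. The issue is twofold — first, that $\Omega_u=\{|u|>1\}$ is not an interval and its measure (not its diameter) is what we control, so one needs the fact that the constant in \eqref{eq:MT} depends only on $|I|$ (or can be taken uniform over all bounded open sets of measure $\le1$), which follows from the scaling behavior of $[\cdot]_{\wsp}$ under dilations when $sp=1$; and second, that passing from $u$ to the truncation $v$ must be done so that the seminorm constraint $[v]\le1$ is genuinely preserved, which is the Gagliardo-seminorm truncation inequality. Once these two technical points are in place, choosing $\alpha_*$ as the one from Theorem \ref{inequality} (possibly shrunk by the $(1+\delta)^2$ factors, which can be taken arbitrarily close to $1$) completes the proof.
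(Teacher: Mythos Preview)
Your strategy for the positive part---splitting by the level set $\{|u|>1\}$ and working with the truncation $v=(|u|-1)_+$---is a legitimate alternative to the paper's argument, but the two routes are genuinely different, and the obstacle you flag at the end is not resolved by the justifications you offer.

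The paper first passes to the symmetric decreasing rearrangement $|u|^*$ (via P\'olya--Szeg\H{o}, Theorem \ref{pol}), so it may assume $u$ is even and nonincreasing on $[0,\infty)$. It then splits $\R$ at a \emph{fixed} interval $I=(-r_0,r_0)$ with $r_0$ chosen large. On $I^c$ monotonicity gives the pointwise bound $|u(x)|^p\le \|u\|_{L^p}^p/(2|x|)$, which controls the tail of the $\Phi$-series term by term. On $I$ one sets $v(x)=u(x)-u(r_0)$ (zero outside), checks directly from monotonicity that $[v]\le[u]$, absorbs the additive constant $u(r_0)\le(\|u\|_{L^p}^p/(2r_0))^{1/p}$ into a multiplicative factor on $v$, and applies Theorem \ref{inequality} on the \emph{fixed} interval $I$. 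No dependence on $u$ enters the domain of the local problem.

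In your route the analogue of $I$ is $\Omega_u=\{|u|>1\}$, an open set whose \emph{measure} is at most $1$ but whose \emph{diameter} is uncontrolled. Your two proposed fixes do not close this. Scaling (dilation-invariance of $[\cdot]$ when $sp=1$) only shows that for an \emph{interval} the constant in \eqref{eq:MT} depends on $|I|$; it says nothing about disconnected open sets. Enclosing $\Omega_u$ in an interval $I'$ yields a bound depending on $|I'|=\diam\Omega_u$, which can be arbitrarily large (take $u$ with two narrow bumps placed far apart). The clean repair is to rearrange $v$: $v^*$ is supported in the interval $(-|\Omega_u|/2,|\Omega_u|/2)$, satisfies $[v^*]\le[v]\le1$, and $\int_{\Omega_u} e^{\alpha v^{1/(1-s)}}dx=\int e^{\alpha (v^*)^{1/(1-s)}}dx$, so Theorem \ref{inequality} applies on an interval of length $\le1$. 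But once you invoke rearrangement you are essentially back to the paper's device; rearranging at the outset, as the paper does, is tidier because it simultaneously delivers the pointwise decay used for the tail.

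For the negative part your sketch is fine and matches the paper: one checks $\|u_\ve\|_{L^p}^p=O(|\log\ve|^{-1})$, so the full norm also tends to $\gamma_s^{1/p}$, and the computation of Proposition \ref{mainprop} together with $\Phi(t)\ge\tfrac12 e^t$ for large $t$ gives divergence for $\alpha>\alpha^*$.
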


As we shall see, Theorem \ref{inequality} and \ref{inequalityR} are sharp in the sense of \eqref{eq:sharpf}. Indeed  one of the open questions in \cite{Par-Ruf} was whether an inequality of the type 
$$ \sup_{u\in\wspi,[u]_{\wspi}\leq1}\int_I f(|u|)e^{\alpha|u|^{\frac{1}{1-s}}}\, dx<+\infty,$$
where $f\colon\R^+\to\R^+$ is such that $f(t)\to\infty$ as $t\to\infty$ holds true for the same exponents of the standard Moser-Trudinger inequality (see \cite{hyder},\cite{IMM}). For $n=1$ we prove the following

\begin{teo}\label{sharpinterval}
Let $I\subset\R$ be a bounded interval, $s\in(0,1)$ and $p>1$ such that $sp=1$. We have
\begin{equation}\label{eq:MTf}
\sup_{u\in\wspi, [u]_{\wsp}\leq 1} \int_I f(|u|)e^{\alpha^* |u|^{\frac{1}{1-s}}}\, dx=\infty,
\end{equation}
\begin{equation}\label{eq:MTfr}
\sup_{u\in\wsp, \|u\|_{\wsp}\leq 1} \int_\R f(|u|)\Phi(\alpha^* |u|^{\frac{1}{1-s}})\, dx=\infty,
\end{equation}
where $f\colon[0,\infty)\to[0,\infty)$ is any Borel measurable function such that $\lim_{t\to+\infty}f(t)=\infty$.
\end{teo}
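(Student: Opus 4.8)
The plan is to construct an explicit family of test functions that saturate the inequality at the critical exponent $\alpha^*$, mimicking the Moser sequence adapted to the Sobolev--Slobodeckij setting. Concretely, I would start from the sequence $\{u_\varepsilon\}$ (or a discrete version $\{u_k\}$) used in \cite{Par-Ruf} — and implicitly in the proof of Theorem \ref{inequality} — to show that the supremum is infinite for $\alpha>\alpha^*$: these are functions supported in the interval $I$ (after translation and scaling we may assume $0\in I$ and work on a small subinterval), which are radially decreasing bump-type functions, equal to a large constant $L_\varepsilon$ on a tiny interval of length $\sim\varepsilon$ and decaying logarithmically outside, normalized so that $[u_\varepsilon]_{\wsp}\le 1$. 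The key quantitative fact to extract from \cite{Par-Ruf} is the asymptotic expansion $L_\varepsilon^{1/(1-s)}=\tfrac{1}{\alpha^*}\log\tfrac1\varepsilon + O(1)$ (or $o(\log\tfrac1\varepsilon)$), which is precisely the borderline behaviour that makes $\int_I e^{\alpha^*|u_\varepsilon|^{1/(1-s)}}\,dx$ bounded but barely so.

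The main point is then a careful bookkeeping of the contribution of the ``core'' region where $u_\varepsilon\equiv L_\varepsilon$. On that region the integrand $f(|u_\varepsilon|)e^{\alpha^*|u_\varepsilon|^{1/(1-s)}}$ is at least $f(L_\varepsilon)\,e^{\alpha^* L_\varepsilon^{1/(1-s)}}$, and by the expansion above $e^{\alpha^* L_\varepsilon^{1/(1-s)}}\gtrsim \varepsilon^{-1}$ up to a multiplicative constant. Hence the core contributes at least $c\,f(L_\varepsilon)\,\varepsilon^{-1}\cdot\varepsilon = c\,f(L_\varepsilon)$, and since $L_\varepsilon\to\infty$ as $\varepsilon\to0$, the hypothesis $f(t)\to\infty$ forces this lower bound to diverge. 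The contribution from the region where $u_\varepsilon$ is small is harmless (there $f$ could in principle be unbounded, but $f$ is fixed and the exponential factor is close to $1$; more carefully, one truncates using that $f$ is finite and one only needs a lower bound). This establishes \eqref{eq:MTf}; I must double-check that the normalization constant is exactly $1$ and not something that only tends to $1$, but if $[u_\varepsilon]\to 1^-$ one simply rescales $u_\varepsilon/[u_\varepsilon]$, which changes $L_\varepsilon$ by a factor $1+o(1)$ and leaves the argument intact.

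For the whole-line statement \eqref{eq:MTfr} I would take the same bump functions, now viewed as elements of $\wsp$, supported in a fixed bounded interval; then $\|u_\varepsilon\|_{L^p(\R)}^p\to0$ as $\varepsilon\to0$ because the core has vanishing measure and amplitude $L_\varepsilon^p = o(\varepsilon^{-1})$ while the logarithmic tail has bounded $L^p$ norm times a vanishing factor — so $\|u_\varepsilon\|_{\wsp}\to 1$ from the $\wsp$-seminorm alone, and after rescaling by $\|u_\varepsilon\|_{\wsp}$ we are back in the admissible class. Since $u_\varepsilon$ is large, $\Phi(\alpha^*|u_\varepsilon|^{1/(1-s)})$ and $e^{\alpha^*|u_\varepsilon|^{1/(1-s)}}$ agree up to lower-order terms on the core (the subtracted polynomial is negligible once the argument is large), so the same lower bound $c\,f(L_\varepsilon)\to\infty$ goes through verbatim.

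I expect the main obstacle to be purely technical rather than conceptual: getting the precise constant in the expansion $L_\varepsilon^{1/(1-s)}=\tfrac{1}{\alpha^*}\log\tfrac1\varepsilon+O(1)$ with the correct error term, since everything hinges on the exponential factor being comparable to $\varepsilon^{-1}$ up to a bounded multiplicative constant (a genuinely unbounded error would kill the argument). This requires either citing the sharpness computation from \cite{Par-Ruf} in the form needed, or redoing the estimate of $[u_\varepsilon]_{\wsp}$ for the one-dimensional bump; the logarithmic cutoff must be chosen so that the Gagliardo seminorm is $\le 1+o(1)$ with the sharp leading constant $\gamma_s$ entering through $\alpha^*=\gamma_s^{s/(1-s)}$. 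Once that expansion is in hand, the rest is the short diverging-lower-bound argument sketched above. A minor additional check is that one may always reduce \eqref{eq:MTf} to a fixed small interval by monotonicity of the supremum in $I$ and translation invariance of $[\cdot]_{\wsp}$.
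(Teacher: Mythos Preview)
Your proposal is correct and follows essentially the same route as the paper. The paper uses exactly the Moser-type family $u_\varepsilon$ from \eqref{eq:moserfunctions}, normalizes by $[u_\varepsilon]$, and the crucial technical input you anticipate --- that the error in $L_\varepsilon^{1/(1-s)}=\tfrac{1}{\alpha^*}\log\tfrac1\varepsilon+O(1)$ be genuinely $O(1)$ --- is obtained there via the rate estimate $[u_\varepsilon]^p-\gamma_s=o\big((\log\tfrac1\varepsilon)^{-1}\big)$ (equation \eqref{veloce2}), which yields $\int_{-\varepsilon}^{\varepsilon}e^{\alpha^*(u_\varepsilon/[u_\varepsilon])^{1/(1-s)}}\,dx\ge\delta>0$ and hence the lower bound $\delta\cdot\inf_{|x|<\varepsilon}f(|u_\varepsilon|)\to\infty$; the whole-line case is handled exactly as you outline, using $\|u_\varepsilon\|_{L^p}^p=O(|\log\varepsilon|^{-1})$ and the fact that $\Phi\ge\tfrac12 e^t$ on the core.
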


\noindent\textbf{Acknowledgements} I am thankful to my advisor Prof. Luca Martinazzi for suggesting me this problem and for many interesting discussions.

%%%%%%%%%%%%%%%%%%%%%%%%%%%%%%%%%%%%%%%%%%%%%%%%
\section{Proof of Theorem \ref{inequality}}

We start this section proving the validity of the Moser-Trudinger inequality \eqref{eq:MT}. The result for $n\geq 2$ is proved in \cite{Par-Ruf} and the proof in the one dimensional case, which we report here for the sake of completeness, follows by a mild adaptation of the techniques in \cite{Par-Ruf}.

\medskip 

Thanks to \cite[Theorem $9.1$]{Peetre}, using Sobolev embeddings and H\"older's inequality we have that there exists a constant $C>0$ independent of $u$ such that for any $u\in\wspi$
\begin{equation}\label{eq:embed}
||u||_{L^q(\R)}\leq C[u]_{\wsp}q^{1-s}
\end{equation}
for any $q>1$. For $[u]_{\wsp}\leq 1$ we write
\begin{equation}\label{eq:usostirling}
\int_I e^{\alpha |u|^{\frac{1}{1-s}}}\, dx=\sum_{k=0}^\infty\int_I \frac{\alpha^k}{k!}|u|^{\frac{k}{1-s}}\, dx\leq \sum_{k=0}^\infty\frac{1}{k!}\left(\frac{C}{1-s}\alpha k\right)^k,
\end{equation}
where in the last inequality we used \eqref{eq:embed}. Thanks to Stirling's formula
\begin{equation}\label{eq:stirling}
k!=\sqrt{2\pi k}\left(\frac ke\right)^k\left(1+O(\frac 1k)\right)
\end{equation}
the series in \eqref{eq:usostirling} converges for small $\alpha$ and we recover a bound (uniform w.r.t. $u$) for 
\begin{equation}\notag
\int_I e^{\alpha |u|^{\frac{1}{1-s}}}\, dx,
\end{equation}
yielding \eqref{eq:MT}.

\medskip

As a direct consequence of \eqref{eq:MT}, using the density of $C^\infty_c(I)$ in $\wspi$, we have the following corollary (see \cite[Proposition 3.2]{Par-Ruf}).

\begin{cor}\label{prop2}
If $u\in\wspi$, for every $\alpha>0$ it holds
\begin{equation}\notag
\int_I e^{\alpha |u|^{\frac{1}{1-s}}}\, dx<\infty.
\end{equation}
\end{cor}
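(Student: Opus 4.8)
The plan is to obtain Corollary \ref{prop2} from the Moser--Trudinger inequality \eqref{eq:MT} by a rescaling argument based on the density of $C^\infty_c(I)$ in $\wspi$ together with the fact that functions in $C^\infty_c(I)$ are bounded. Throughout set $q:=\frac{1}{1-s}$, which satisfies $q>1$ because $sp=1$ and $p>1$. Fix $u\in\wspi$ and $\alpha>0$. By density, pick $\varphi\in C^\infty_c(I)$ with $[u-\varphi]_{\wsp}$ as small as we wish; put $v:=u-\varphi$ and $\varepsilon:=[v]_{\wsp}$. Since $\wspi$ is a closed linear subspace of $\wsp$ and $\varphi\in C^\infty_c(I)\subset\wspi$, we have $v\in\wspi$. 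If $\varepsilon=0$ then $v$ is a.e.\ equal to a constant belonging to $L^p(\R)$, hence $v\equiv 0$ and $u=\varphi\in C^\infty_c(I)$, in which case the integrand $e^{\alpha|u|^q}$ is bounded and the claim is immediate; so from now on assume $\varepsilon>0$.

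Next I would split the integrand. Using $|u|\le|\varphi|+|v|$ and the convexity of $t\mapsto t^q$ (so that $(a+b)^q\le 2^{q-1}(a^q+b^q)$ for $a,b\ge 0$), we get pointwise
$$e^{\alpha|u|^q}\le e^{\alpha 2^{q-1}\|\varphi\|_{L^\infty(I)}^q}\,e^{\alpha 2^{q-1}|v|^q}=:M\,e^{\beta|v|^q},$$
where $\beta:=\alpha 2^{q-1}$ and $M<\infty$ because $\varphi$ is smooth and compactly supported. It then suffices to control $\int_I e^{\beta|v|^q}\,dx$. Here I would exploit that $\varphi$ can be chosen close enough to $u$ that $\beta\varepsilon^q<\alpha_*$: this is possible since $\beta$ depends only on the fixed $\alpha$, while $\varepsilon=[u-\varphi]_{\wsp}$ can be made smaller than $(\alpha_*/\beta)^{1/q}$ by density. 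Setting $w:=v/\varepsilon\in\wspi$ we have $[w]_{\wsp}=1$ and $\beta|v|^q=(\beta\varepsilon^q)|w|^q$, so by \eqref{eq:MT} applied with the admissible exponent $\beta\varepsilon^q\in[0,\alpha_*)$,
$$\int_I e^{\beta|v|^q}\,dx=\int_I e^{(\beta\varepsilon^q)|w|^q}\,dx\le\sup_{\substack{z\in\wspi\\ [z]_{\wsp}\le 1}}\int_I e^{(\beta\varepsilon^q)|z|^q}\,dx<\infty.$$
Combining the two displays gives $\int_I e^{\alpha|u|^q}\,dx\le M\int_I e^{\beta|v|^q}\,dx<\infty$, which is the assertion.

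I do not expect a genuine obstacle here; the argument is elementary once \eqref{eq:MT} is in hand. The only points requiring a little care are: checking that $v=u-\varphi$ belongs to $\wspi$ and may be renormalized by its own seminorm (this uses that $\wspi$ is a closed linear subspace and that the degenerate case $\varepsilon=0$ is harmless), and ensuring the prefactor $M$ is finite, which is precisely why the approximation is carried out inside $C^\infty_c(I)$ rather than in a larger subspace. The reduction of an arbitrary exponent $\alpha>0$ to a subcritical one is exactly the role played by the smallness of $[u-\varphi]_{\wsp}$, which absorbs the constant $\beta=\alpha 2^{q-1}$ and brings the relevant exponent $\beta\varepsilon^q$ below $\alpha_*$.
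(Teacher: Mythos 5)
Your argument is correct and is precisely the density/rescaling argument that the paper invokes when it writes ``as a direct consequence of \eqref{eq:MT}, using the density of $C^\infty_c(I)$ in $\wspi$'' and points to \cite[Proposition 3.2]{Par-Ruf}. The key points you rightly identify --- splitting $u=\varphi+v$ with $\varphi\in C^\infty_c(I)$ bounded, using $(a+b)^q\leq 2^{q-1}(a^q+b^q)$ to separate the contributions, and choosing $[u-\varphi]_{\wsp}$ small enough that the rescaled exponent $\alpha 2^{q-1}[v]_{\wsp}^{q}$ drops below $\alpha_*$ --- are exactly the standard ingredients of this classical reduction, and your handling of the degenerate case $[v]_{\wsp}=0$ is also fine.
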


We now give a useful result on the Gagliardo seminorm of radially symmetric functions (see \cite[Proposition 4.3]{Par-Ruf}), which will turn out to be useful later on.

\begin{prop}\label{prop:norm radial}
Let $u\in\wsp$ be radially symmetric and let $sp=1$. Then

\begin{equation}\label{eq:norm radial}
[u]_{\wsp}=\int_\R\int_\R \frac{|u(x)-u(y)|^p}{|x-y|^2}\, dx\, dy=4\int_0^{+\infty}\int_0^{+\infty}|u(x)-u(y)|^p\frac{x^2+y^2}{(x^2-y^2)^2}\, dx\, dy
\end{equation}
\end{prop}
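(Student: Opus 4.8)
The plan is to exploit the symmetry of both the integrand and the domain $\R\times\R$ under the reflections $x\mapsto -x$ and $y\mapsto -y$. Since $u$ is radially symmetric, $u(-x)=u(|x|)=:u(x)$ abusing notation, and $|u(x)-u(y)|^p$ depends only on $|x|$ and $|y|$. Split $\R\times\R$ into the four quadrants according to the signs of $x$ and $y$. On the quadrant $\{x>0,\,y>0\}$ the integrand is $|u(x)-u(y)|^p/|x-y|^2$; on $\{x<0,\,y<0\}$ the change of variables $(x,y)\mapsto(-x,-y)$ maps it to the same thing, so these two contribute equally. On the mixed quadrants $\{x>0,\,y<0\}$ and $\{x<0,\,y>0\}$, again by reflection one reduces to an integral over $\{x>0,\,y>0\}$ but now with denominator $(x+y)^2$ in place of $(x-y)^2$, and these two also contribute equally. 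Hence
\begin{equation}\notag
[u]_{\wsp}^p=2\int_0^\infty\int_0^\infty|u(x)-u(y)|^p\left(\frac{1}{(x-y)^2}+\frac{1}{(x+y)^2}\right)\,dx\,dy.
\end{equation}

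The remaining step is the purely algebraic identity
\begin{equation}\notag
\frac{1}{(x-y)^2}+\frac{1}{(x+y)^2}=\frac{(x+y)^2+(x-y)^2}{(x-y)^2(x+y)^2}=\frac{2(x^2+y^2)}{(x^2-y^2)^2},
\end{equation}
which, combined with the previous display, yields the factor $4$ and the stated formula. I would also note in passing that the seminorm as originally written in \eqref{eq:gagliardo} carries a $1/p$ power, so strictly the displayed equation \eqref{eq:norm radial} is an identity for $[u]_{\wsp}^p$; I would either correct the exponent or simply phrase the proposition for the $p$-th power to match \cite[Proposition 4.3]{Par-Ruf}.

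There is no real obstacle here: the only point requiring a word of justification is that the integrals over the individual quadrants are well-defined (possibly $+\infty$) and that the rearrangement into a sum is legitimate, which is immediate since the integrand is nonnegative and Tonelli's theorem applies. The hypothesis $sp=1$ is what makes the Gagliardo kernel exactly $|x-y|^{-2}$, so that the algebra above closes; for general $s$ one would get $|x-y|^{-(1+sp)}$ and no such clean closed form. I would present the computation in two or three lines, emphasizing only the quadrant decomposition and the algebraic identity.
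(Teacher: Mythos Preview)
Your argument is correct and is exactly the ``direct computation'' the paper sketches: split $\R\times\R$ into the four sign quadrants, use the evenness of $u$ and the changes of variable $x\mapsto -x$, $y\mapsto -y$ to reduce to two integrals over $(0,\infty)^2$ with kernels $(x-y)^{-2}$ and $(x+y)^{-2}$, and combine them via the identity $\frac{1}{(x-y)^2}+\frac{1}{(x+y)^2}=\frac{2(x^2+y^2)}{(x^2-y^2)^2}$. Your remark that the displayed identity is really for $[u]_{\wsp}^p$ rather than $[u]_{\wsp}$ is also well taken.
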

\begin{proof}
The proof will follow from a direct computation. We split

\begin{equation}\notag
\begin{split}
&\int_\R\int_\R \frac{|u(x)-u(y)|^p}{|x-y|^2}\, dx\, dy\\
=&\int_0^{+\infty}\int_0^{+\infty} \frac{|u(x)-u(y)|^p}{|x-y|^2} \, dx\, dy+\int_{-\infty}^0 \int_{-\infty}^0  \frac{|u(x)-u(y)|^p}{|x-y|^2}\, dx\, dy \\
+&\int_0^{+\infty}\int_{-\infty}^0  \frac{|u(x)-u(y)|^p}{|x-y|^2}\, dx\, dy+ \int_{-\infty}^0\int_0^{+\infty}  \frac{|u(x)-u(y)|^p}{|x-y|^2}\, dx\, dy.
\end{split}
\end{equation}
Using a straightforward change of variable and the symmetry of $u$, we obtain the claim.
\end{proof}

\medskip

To give an upper bound for the optimal exponent $\bar \alpha$ such that the supremum in \eqref{eq:MT} is finite for $\alpha\in[0,\bar\alpha)$, we define the family of functions

\begin{equation}\label{eq:moserfunctions}
u_\ve(x):=
\begin{cases}
|\log\ve|^{1-s}\quad&\text{ if }|x|\leq\ve\\
\frac{|\log|x||}{|\log\ve|^{s}}\quad&\text{ if }\ve<|x|<1\\
0\quad&\text{ if }|x|\geq 1.
\end{cases}
\end{equation}
Notice that the restrictions of $u_\ve$ to $I$ belong to $\wspi$. 

\begin{prop}\label{const}
Let $sp=1$ and $(u_\ve)\subset\wspi$ be the family of functions defined in \eqref{eq:moserfunctions}. Then
\begin{equation}\label{eq:limsemi}
\lim_{\ve\to0}[u_\ve]_{\wsp}^p=\gamma_s:=8\, \Gamma(p+1)\sum_{k=0}^\infty \frac{1}{(1+2k)^p}.
\end{equation}
\end{prop}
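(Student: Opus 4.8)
The plan is to compute the limit of $[u_\ve]^p_{\wsp}$ directly from the radial formula \eqref{eq:norm radial}, splitting the domain $(0,\infty)\times(0,\infty)$ according to where $u_\ve$ is constant, where it is logarithmic, and where it vanishes. Writing $L:=|\log\ve|$, the function on the positive half-line is $L^{1-s}$ on $(0,\ve)$, equals $|\log x|/L^s=-\log x/L^s$ on $(\ve,1)$, and is $0$ on $(1,\infty)$. The diagonal block $(\ve,1)\times(\ve,1)$ is the main term: there $|u_\ve(x)-u_\ve(y)|^p = L^{-1}|\log x-\log y|^p$ since $sp=1$. After the change of variables $x=e^{-t}$, $y=e^{-\tau}$ with $t,\tau\in(0,L)$, one has $\frac{x^2+y^2}{(x^2-y^2)^2}\,dx\,dy = \frac{\cosh(t-\tau)}{2\sinh^2(t-\tau)}\,dt\,d\tau$, so this block contributes
\begin{equation}\notag
\frac{4}{L}\int_0^L\int_0^L |t-\tau|^p\,\frac{\cosh(t-\tau)}{2\sinh^2(t-\tau)}\,dt\,d\tau .
\end{equation}
The inner double integral, after passing to the variable $r=t-\tau$ and integrating out the other variable, behaves like $L\int_0^\infty r^p\frac{\cosh r}{\sinh^2 r}\,dr$ up to lower-order terms (the singularity at $r=0$ is integrable because $r^p/\sinh^2 r\sim r^{p-2}$ and $p>1$; the tail decays exponentially). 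Dividing by $L$ and letting $\ve\to0$, this block tends to $4\int_0^\infty r^p\frac{\cosh r}{\sinh^2 r}\,dr$, which I will identify below with $\gamma_s$.

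Next I would show that every other block is negligible. The blocks $(0,\ve)\times(0,\ve)$ and $(1,\infty)\times(1,\infty)$ vanish identically (there $u_\ve$ is constant). For the mixed block $(0,\ve)\times(\ve,1)$: here $|u_\ve(x)-u_\ve(y)|=L^{1-s}-L^{-s}|\log y| = L^{-s}(L-|\log y|) = L^{-s}(L+\log y)$, and substituting $x\in(0,\ve)$, $y=e^{-\tau}$ with $\tau\in(0,L)$ one estimates the integral by comparing $\frac{x^2+y^2}{(x^2-y^2)^2}$ with $\frac{1}{(y-x)^2}\le\frac{1}{(y-\ve)^2}$ on the relevant range and checking, via Stirling/direct estimates, that the result is $O(1/L)=o(1)$; the symmetric block behaves the same way. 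For the block $(\ve,1)\times(1,\infty)$: there $|u_\ve(x)-u_\ve(y)|=L^{-s}|\log x|$, and since $x<1<y$ one has $|x-y|\ge y-1$ together with $|x-y|\ge 1-x$, giving enough decay to bound this piece by $O(1/L)$ as well; again the symmetric block is identical, and the blocks involving $(0,\ve)$ against $(1,\infty)$ are controlled trivially since $|x-y|\ge 1-\ve$. I would collect these estimates into one lemma-style paragraph, using that $L\to\infty$ to kill each $o(1)$ term.

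Finally I would verify the closed form $4\int_0^\infty r^p\frac{\cosh r}{\sinh^2 r}\,dr = 8\,\Gamma(p+1)\sum_{k=0}^\infty(1+2k)^{-p}$. Writing $\frac{\cosh r}{\sinh^2 r} = -\frac{d}{dr}\frac{1}{\sinh r}$ and integrating by parts (the boundary terms vanish for $p>1$) reduces the integral to $4p\int_0^\infty \frac{r^{p-1}}{\sinh r}\,dr$. Expanding $\frac{1}{\sinh r} = \frac{2e^{-r}}{1-e^{-2r}} = 2\sum_{k=0}^\infty e^{-(2k+1)r}$ and integrating term by term using $\int_0^\infty r^{p-1}e^{-(2k+1)r}\,dr = \Gamma(p)(2k+1)^{-p}$ gives $8p\,\Gamma(p)\sum_{k\ge0}(2k+1)^{-p} = 8\,\Gamma(p+1)\sum_{k\ge0}(2k+1)^{-p} = \gamma_s$, which is exactly \eqref{eq:limsemi}.

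The main obstacle is the diagonal block: one must justify carefully that restricting the $t$-integral to the finite window $(0,L)$ (rather than all of $\R$) and the presence of the boundary of the square only cost lower-order terms, i.e. that $\frac1L\int_0^L\int_0^L|t-\tau|^p\frac{\cosh(t-\tau)}{2\sinh^2(t-\tau)}\,dt\,d\tau \to \int_{-\infty}^\infty|r|^p\frac{\cosh r}{2\sinh^2 r}\,dr$. This follows from Fubini and the dominated convergence theorem once one writes the double integral as $\int_{-L}^L (L-|r|)\,|r|^p\frac{\cosh r}{2\sinh^2 r}\,dr$ and divides by $L$, but the integrable singularity at $r=0$ (requiring $p>1$) and the exponential tail both need to be pointed out. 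Everything else is routine change-of-variables bookkeeping.
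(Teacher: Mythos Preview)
Your proposal is correct and follows the same overall strategy as the paper: you use the radial formula, split $(0,\infty)^2$ into the same blocks (the paper calls them $I_1,\dots,I_4$), show the off-diagonal pieces are $o(1)$, and reduce the closed form to a Gamma-integral/geometric-series computation.

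The one point of genuine difference is the treatment of the main block $(\ve,1)^2$. The paper keeps the variables $(x,y)$, performs the substitution $x\mapsto x/y$ and then an integration by parts in $y$, obtaining three terms which are analyzed separately and shown to converge to $8\int_1^\infty|\log x|^p\frac{x^2+1}{(x^2-1)^2}\,dx$. You instead pass to exponential coordinates $x=e^{-t}$, $y=e^{-\tau}$, which makes the integrand depend only on $t-\tau$; writing the square integral as $\int_{-L}^L(L-|r|)\,|r|^p\frac{\cosh r}{2\sinh^2 r}\,dr$ and dividing by $L$ gives the limit directly by dominated convergence. This is cleaner and avoids the three-term bookkeeping in the paper; the two answers are of course the same integral (set $x=e^r$). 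Your off-diagonal estimates are only sketched, but the indicated bounds do work (e.g.\ for $I_4$ one gets exactly $\frac{8}{L}\int_\ve^1\frac{|\log x|^p}{1-x^2}\,dx=O(1/L)$), so nothing is missing.
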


\begin{proof}
We will follow the proof in \cite{Par-Ruf}. Define

\begin{equation}\label{eq:ieps}
I(\ve):=\int_\R\int_\R \frac{|u_\ve(x)-u_\ve(y)|^p}{|x-y|^2}\, dx\, dy.
\end{equation}
Using Proposition \ref{prop:norm radial} and  \eqref{eq:moserfunctions} we see that $I(\ve)$ can be decomposed as 
$$I(\ve)=I_1(\ve)+I_2(\ve)+I_3(\ve)+I_4(\ve),$$ 

where
\begin{equation}\notag
I_1(\ve)=\frac{8}{|\log\ve|}\int_\ve^1\int_0^\ve |\log x-\log\ve|^p \frac{x^2+y^2}{(x^2-y^2)^2}\, dx\, dy,
\end{equation}

\begin{equation}\notag
I_2(\ve)=\frac{4}{|\log\ve|}\int_\ve^1\int_\ve^1 |\log x-\log y|^p \frac{x^2+y^2}{(x^2-y^2)^2}\, dx\, dy,
\end{equation}

\begin{equation}\notag
I_3(\ve)=8|\log\ve|^{p-1}\int_1^{+\infty}\int_0^\ve \frac{x^2+y^2}{(x^2-y^2)^2}\, dx\, dy,
\end{equation}

\begin{equation}\notag
I_4(\ve)=\frac{8}{|\log\ve|}\int_\ve^1\int_1^{+\infty} |\log x |^p \frac{x^2+y^2}{(x^2-y^2)^2}\, dx\, dy.
\end{equation}

With an integration by parts, it is easy to check that $\lim_{\ve\to0}I_i(\ve)=0$ for $i=1,3,4$. As for $I_2(\ve)$, integrating by parts after a change of variables we have
\begin{equation}\notag
\begin{split}
I_2(\ve)&=\frac{4}{|\log\ve|}\left\{\log y\left(\int_{\frac{\ve}{y}}^{\frac{1}{y}}|\log x|^p \frac{x^2+1}{(x^2-1)^2}\, dx\right)\right\}\Bigg|_{y=\ve}^{y=1}\\
&+\frac{4}{|\log\ve|} \int_\ve^1\frac{\log y}{y^2}|\log \frac 1y|^p\frac{\frac{1}{y^2}+1}{\left(\frac{1}{y^2}-1\right)^2}\, dy\\
&-\frac{4\ve}{|\log\ve|} \int_\ve^1\frac{\log y}{y^2}|\log \frac{\ve}{y} |^p\frac{\left(\frac{\ve}{y}\right)^2+1}{\left(\left(\frac{\ve}{y}\right)^2-1\right)^2}\, dy.
\end{split}
\end{equation}

A direct computation for the first term gives
\begin{equation}\notag
\begin{split}
&\frac{4}{|\log\ve|}\left\{\log y\left(\int_{\frac{\ve}{y}}^{\frac{1}{y}}|\log x|^p \frac{x^2+1}{(x^2-1)^2}\, dx\right)\right\}\Bigg|_{y=\ve}^{y=1}\\
&=4\int_1^{\frac{1}{\ve}} |\log x|^p \frac{x^2+1}{(x^2-1)^2}\, dx,
\end{split}
\end{equation}
which converges to 
\begin{equation}\notag
4\int_1^{+\infty} |\log x|^p \frac{x^2+1}{(x^2-1)^2}\, dx,
\end{equation}
as $\ve\to0$. Moreover, since 

\begin{equation}\notag
\int_0^1 \frac{\log y}{y^2}|\log \frac 1y|^p\frac{\frac{1}{y^2}+1}{\left(\frac{1}{y^2}-1\right)^2}\, dy<+\infty
\end{equation}

the second term in the sum converges to $0$ as $\ve\to0$. 

\medskip

After setting $\frac{\ve}{y}=x$, for the last term in the sum we have
\begin{equation}\notag
\begin{split}
&-\frac{4\ve}{|\log\ve|} \int_\ve^1\frac{\log y}{y^2}|\log \frac{\ve}{y} |^p\frac{\left(\frac{\ve}{y}\right)^2+1}{\left(\left(\frac{\ve}{y}\right)^2-1\right)^2}\, dy\\
&=-\frac{4}{|\log\ve|}\int_\ve^1 \log\left(\frac{\ve}{x}\right)|\log x|^p \frac{x^2+1}{(x^2-1)^2}\, dx\\
&=4\int_\ve^1|\log x|^p \frac{x^2+1}{(x^2-1)^2}\, dx-\frac{4}{|\log\ve|}\int_\ve^1 |\log x|^{p+1} \frac{x^2+1}{(x^2-1)^2}\, dx
\end{split}
\end{equation}
which converges to 
  
\begin{equation}\notag
4\int_0^1|\log x|^p \frac{x^2+1}{(x^2-1)^2}\, dx=4\int_1^{+\infty} |\log x|^p \frac{x^2+1}{(x^2-1)^2}\, dx
\end{equation}
as $\ve\to0$. Summing up, we have
 
\begin{equation}\label{eq:prelim}
\lim_{\ve\to0}[u_\ve]_{\wsp}^p=\lim_{\ve\to0}I_2(\ve)=8\int_1^{+\infty}|\log x|^p \frac{x^2+1}{(x^2-1)^2}\, dx.
\end{equation}

Integrating by parts we obtain
 
\begin{equation}\notag
\begin{split}
&\int_1^{+\infty}|\log x|^p \frac{x^2+1}{(x^2-1)^2}\, dx=p\int_1^{+\infty}\frac{|\log x|^{p-1}}{x^2-1}\, dx\\
&=p\int_0^{1}\frac{|\log t|^{p-1}}{1-t^2}\, dt,
\end{split}
\end{equation}
where we set $t=\frac 1x$. Recall now

\begin{equation}\label{eq:geometric intgamma}
\frac{1}{1-x^2}=\sum_{k=0}^\infty x^{2k},\qquad \int_0^1|\log x|^{p-1} x^{2k}\, dx=\frac{\Gamma(p)}{(1+2k)^p},
\end{equation}
 where $\Gamma(\cdot)$ is the Euler Gamma function. Thanks to \eqref{eq:geometric intgamma} we write

\begin{equation}\label{eq:quasi}
\begin{split}
\int_0^1 \frac{|\log t|^{p-1}}{1-t^2}\, dt&=\sum_{k=0}^\infty\int_0^1|\log t|^{p-1} t^{2k}\, dt=\Gamma(p)\sum_{k=0}^\infty \frac{1}{(1+2k)^p},
\end{split}
\end{equation}
proving \eqref{eq:limsemi}. 
\end{proof}

The upper bound for the optimal exponent follows directly from Proposition \ref{const}. 

\begin{prop}\label{mainprop}
Let $sp=1$. There exists $\alpha^*:=\gamma_s^{\frac{s}{1-s}}$ such that
\begin{equation}\notag
\sup_{u\in\wspi,[u]_{\wsp}\leq 1} \int_I e^{\alpha |u|^{\frac{1}{1-s}}}\, dx=+\infty\quad\text{ for }\alpha\in(\alpha^*,+\infty).
\end{equation}
\end{prop}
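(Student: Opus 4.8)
The plan is to use the Moser-type test functions $u_\ve$ from \eqref{eq:moserfunctions}, suitably normalized, to force the supremum to be infinite whenever $\alpha>\alpha^*$. First I would set $v_\ve:=u_\ve/[u_\ve]_{\wsp}$, so that $[v_\ve]_{\wsp}=1$ and $v_\ve$ is an admissible competitor in \eqref{eq:MT}. By Proposition \ref{const} we know $[u_\ve]_{\wsp}^p\to\gamma_s$ as $\ve\to0$, hence for any $\delta>0$ and all $\ve$ small enough we have $[u_\ve]_{\wsp}^{\frac{1}{1-s}}\le(\gamma_s+\delta)^{\frac{s}{1-s}}$, using $p=1/s$ so that $1/(p(1-s))=s/(1-s)$.

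Next I would bound the integral from below by restricting to the region $\{|x|\le\ve\}$, where $u_\ve\equiv|\log\ve|^{1-s}$. On that set,
\begin{equation}\notag
\int_I e^{\alpha|v_\ve|^{\frac{1}{1-s}}}\,dx\ge \int_{|x|\le\ve} e^{\alpha\frac{|u_\ve|^{\frac{1}{1-s}}}{[u_\ve]^{\frac{1}{1-s}}}}\,dx = 2\ve\,\exp\!\left(\alpha\,\frac{|\log\ve|}{[u_\ve]_{\wsp}^{\frac{1}{1-s}}}\right),
\end{equation}
provided $\ve$ is small enough that $(-\ve,\ve)\subset I$. Using the bound on $[u_\ve]_{\wsp}^{\frac{1}{1-s}}$ from the previous step, the right-hand side is at least
\begin{equation}\notag
2\ve\,\exp\!\left(\frac{\alpha}{(\gamma_s+\delta)^{\frac{s}{1-s}}}\,|\log\ve|\right)=2\,\ve^{\,1-\frac{\alpha}{(\gamma_s+\delta)^{s/(1-s)}}}.
\end{equation}
Since $\alpha>\alpha^*=\gamma_s^{s/(1-s)}$, by choosing $\delta$ small enough we can guarantee $\alpha/(\gamma_s+\delta)^{s/(1-s)}>1$, so the exponent $1-\alpha/(\gamma_s+\delta)^{s/(1-s)}$ is strictly negative, and the quantity diverges to $+\infty$ as $\ve\to0$. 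This shows the supremum over the family $\{v_\ve\}$ is infinite, a fortiori the supremum in \eqref{eq:MT} is infinite.

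The only mild technical point — and the closest thing to an obstacle — is keeping track of the precise exponents: one must verify $\frac{1}{p(1-s)}=\frac{s}{1-s}$ from $sp=1$, and control the denominator $[u_\ve]_{\wsp}$ uniformly via Proposition \ref{const} rather than pointwise, which is why I introduce the slack parameter $\delta$ and then send it to $0$ after fixing $\alpha$. No compactness or delicate estimate is needed beyond this; the divergence is driven entirely by the concentration of $u_\ve$ on the tiny interval $(-\ve,\ve)$ where it takes the large value $|\log\ve|^{1-s}$, balanced against the logarithmic growth of $[u_\ve]_{\wsp}^{\frac{1}{1-s}}$, exactly as in Moser's original argument.
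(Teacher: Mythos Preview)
Your proof is correct and follows essentially the same route as the paper: normalize $u_\ve$ by its seminorm, restrict the integral to $(-\ve,\ve)$ where $u_\ve\equiv|\log\ve|^{1-s}$, and use Proposition~\ref{const} to ensure the resulting exponent $1-\alpha/[u_\ve]^{1/(1-s)}$ is eventually negative. The paper packages the same idea by setting $\beta:=\alpha[u_\ve]^{-\frac{1}{1-s}}>1$ for small $\ve$, which is exactly your $\alpha/(\gamma_s+\delta)^{s/(1-s)}>1$ after introducing the slack $\delta$; the only (harmless) slip is your closing phrase about ``logarithmic growth'' of $[u_\ve]^{1/(1-s)}$, since in fact this quantity converges to $\gamma_s^{s/(1-s)}$.
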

\begin{proof}
Let $u_\ve$ be the family of functions in $\wspi$ defined in \eqref{eq:moserfunctions}. Thanks to Proposition \ref{const} we have that $[u_\ve]_{\wsp}\to (\gamma_s)^{\frac 1p}$ as $\ve\to0$. Fix $\alpha>\gamma_s^{\frac{s}{1-s}}$. 
For $\ve$ small enough, there exists $\beta>0$ such that $\alpha[u_\ve]^{-\frac{1}{1-s}}\geq\beta>1$. If we set $v_\ve:=\frac{u_\ve}{[u_\ve]}$ we have
\begin{equation}\notag
\int_I e^{\alpha|v_\ve|^{\frac{1}{1-s}}}\, dx\geq \int_{-\ve}^\ve e^{\alpha|v_\ve|^{\frac{1}{1-s}}}\, dx \geq \int_{-\ve}^\ve e^{-\beta\log\ve}\, dx=2\ve^{1-\beta}\to +\infty
\end{equation}
as $\ve\to0$, since $\beta>1$.
\end{proof}

%%%%%%%%%%%%%%%%%%%%%%%%%%%%%%%%%%%%
\section{Proof of Theorem \ref{inequalityR}}

We shall adapt a technique by Ruf \cite{Ruf-H1} to our setting.

\medskip

For a measurable function $u$ we set $|u|^{*}:\R{}\to \R{}_+$ to be its non-increasing symmetric rearrangement, whose definition we shall now recall.
For a measurable set $A\subset \mathbb{R}$, we define 
$$A^{*}=(-|A|/2,|A|/2).$$
The set $A^{*}$ is symmetric (with respect to $0$) and  $|A^*|=|A|$. For a non-negative measurable function $f$, such that 
$$|\{x\in \mathbb{R} : f(x)>t\}|<\infty \quad \text{ for every } t>0,$$
we define the symmetric non-increasing rearrangement of $f$ by
$$f^{*}(x)=\int_{0}^{\infty} \chi_{\{y\in\R{} : f(y)>t\}^{*}}(x) dt.$$
Notice that $f^*$ is even, i.e. $f^*(x)=f^*(-x)$ and non-increasing (on $[0,\infty)$).

We will state here the two properties that we shall use in the proof of Proposition \ref{inequalityR}. The following one is proven e.g. in \cite[Section 3.3]{LL}.

\begin{prop}\label{propu*}
Given a measurable function $F:\mathbb{R}\to \mathbb{R}$ and a non-negative non-decreasing function $f:\R{}\to\R{}$, it holds
$$\int_{\mathbb{R}}F(f)dx =\int_{\mathbb{R}}F(f^{*})dx.$$
\end{prop}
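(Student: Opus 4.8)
The plan is to prove the claim by reducing it to the defining layer-cake representation of both sides and the measure-preserving property of symmetric rearrangement. First I would record the basic fact that $f$ and $f^*$ are equidistributed: for every $t>0$ one has $|\{x\in\R:f(x)>t\}|=|\{x\in\R:f^*(x)>t\}|$. This follows because, by the definition $f^*(x)=\int_0^\infty \chi_{\{f>t\}^*}(x)\,dt$ and the fact that each superlevel set $\{f>s\}^*$ is a symmetric interval that shrinks as $s$ increases, we get $\{f^*>t\}=\{f>t\}^*$ up to a null set, and $|\{f>t\}^*|=|\{f>t\}|$ by construction of $A^*$.

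Next I would assume first that $F$ is non-negative. Since $f$ and $f^*$ are non-negative and (for $f^*$) non-increasing, and since we only care about $F$ evaluated on $[0,\infty)$, I would invoke the standard fact that a non-negative measurable function $\Psi$ on $[0,\infty)$ can be written as $\Psi(\lambda)=\int_0^\infty \chi_{\{\Psi>\tau\}}(\lambda)\,d\tau$ and that, composing with $f$ and integrating,
\begin{equation}\notag
\int_\R F(f(x))\,dx=\int_0^\infty \big|\{x:F(f(x))>\tau\}\big|\,d\tau=\int_0^\infty\big|f^{-1}(E_\tau)\big|\,d\tau,
\end{equation}
where $E_\tau:=\{\lambda\ge 0: F(\lambda)>\tau\}$. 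The point is that $|f^{-1}(E_\tau)|$ depends only on the distribution function of $f$: writing $E_\tau$ as a countable disjoint union of intervals and using equidistribution on each interval (differences of superlevel sets), one gets $|f^{-1}(E_\tau)|=|(f^*)^{-1}(E_\tau)|$ for a.e.\ $\tau$. Integrating in $\tau$ and running the layer-cake representation backwards yields $\int_\R F(f)\,dx=\int_\R F(f^*)\,dx$ for non-negative $F$.

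For general measurable $F$ I would split $F=F^+-F^-$ into positive and negative parts, apply the non-negative case to each, and subtract — this is legitimate provided the two integrals are not both infinite, which is exactly the tacit hypothesis under which the stated identity is meaningful (otherwise both sides are understood in $[-\infty,+\infty]$ with the obvious interpretation, and the identity still holds for $F^\pm$ separately). The main obstacle, and the only real subtlety, is justifying that $|f^{-1}(E_\tau)|$ is a function of the distribution of $f$ alone when $E_\tau$ is an arbitrary Borel subset of $[0,\infty)$ rather than a half-line: one handles half-lines directly from equidistribution, then intervals $[a,b)$ by subtraction, then finite unions by additivity, then open sets by monotone approximation, and finally general Borel $E_\tau$ by the monotone class / outer regularity argument; since $F$ is only assumed measurable and $f$ monotone this measure-theoretic bookkeeping is where all the work lies, but it is entirely routine. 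Alternatively, and more cleanly, one can simply cite \cite[Section~3.3]{LL} for the full statement, since everything above is precisely the content of that reference; I would present the short equidistribution argument and then refer to \cite{LL} for the remaining details.
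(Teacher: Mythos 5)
The paper offers no proof of this proposition at all; it simply cites \cite[Section~3.3]{LL}, which you also do at the end, so in that sense your proposal and the paper's ``proof'' coincide. Your fleshed-out sketch of the underlying equimeasurability argument is correct in substance: reducing via the layer-cake formula to the identity $|f^{-1}(E_\tau)|=|(f^*)^{-1}(E_\tau)|$ is the right move, and passing from half-lines to general Borel sets is indeed a standard measure-theoretic extension. Two small caveats. First, a general Borel set $E_\tau$ is \emph{not} a countable disjoint union of intervals as you initially write; you do self-correct, but the tool that actually closes the gap is the $\pi$--$\lambda$ (Dynkin/monotone-class) uniqueness theorem applied to the pushforward measures $f_*\mathcal{L}$ and $(f^*)_*\mathcal{L}$ restricted to $(0,\infty)$ -- ``outer regularity'' is not quite the right phrase, and one should restrict to $(0,\infty)$ because these pushforwards may assign infinite mass to $\{0\}$ and hence fail to be $\sigma$-finite there. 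Second, note that the hypothesis as printed in the paper (``$f$ non-negative \emph{non-decreasing} on $\R$'') is evidently a slip: such a function with $|\{f>t\}|<\infty$ for all $t>0$ is identically zero, so the proposition would be vacuous. You sensibly read it as the intended hypothesis (non-negative measurable with finite superlevel sets), under which both your sketch and the Lieb--Loss reference apply.
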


The following P\'olya-Szeg\H{o} type inequality can be found e.g. in \cite[Theorem 9.2]{Almgren.Lieb}.

\begin{teo}\label{pol}
Let $0<s<1$ and $u\in \wsp$. Then
$$[|u|^*]_\wsp\leq[u]_\wsp.$$ 
\end{teo}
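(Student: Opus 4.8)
This is the one--dimensional fractional P\'olya--Szeg\H{o} inequality for the Gagliardo seminorm \eqref{eq:gagliardo}, and I would establish it along classical lines: realise $|u|^{*}$ as a limit of iterated two--point rearrangements of $u$, show that a single two--point rearrangement never increases the seminorm, and pass to the limit by lower semicontinuity. Since $\bigl||u(x)|-|u(y)|\bigr|\le|u(x)-u(y)|$ and the rearrangement depends on $|u|$ only, we may replace $u$ by $|u|$ and assume $u\ge0$; moreover $u\in\wsp\subset L^{p}(\R)$ gives $|\{u>t\}|\le t^{-p}\|u\|^{p}_{L^{p}(\R)}<\infty$ for every $t>0$, so $u^{*}:=|u|^{*}$ is well defined. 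For a half--line $H=(a,+\infty)$ with reflection $\sigma_H(x)=2a-x$, define the \emph{polarization} $u^H$ by $u^H(x)=\max\{u(x),u(\sigma_H x)\}$ for $x\in H$ and $u^H(x)=\min\{u(x),u(\sigma_H x)\}$ for $x\notin H$; this $u^H$ is equimeasurable with $u$, so $\|u^H\|_{L^{p}(\R)}=\|u\|_{L^{p}(\R)}$ by Proposition \ref{propu*}.

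The core of the argument is the inequality $[u^H]_{\wsp}\le[u]_{\wsp}$. Writing the double integral defining $[u]^{p}_{\wsp}$ and grouping it over the orbits $\{x,\sigma_H x,y,\sigma_H y\}$ of $\sigma_H$, one uses that $\sigma_H$ is an isometric involution (hence $|x-y|=|\sigma_H x-\sigma_H y|$ and $|x-\sigma_H y|=|\sigma_H x-y|$) and that the four pairs inside such an orbit split into two at distance $|x-y|$ and two at distance $|x-\sigma_H y|$, one of these distances being no larger than the other. With $\alpha=u(x)$, $\beta=u(\sigma_H x)$, $\gamma=u(y)$, $\delta=u(\sigma_H y)$, $a=\max\{\alpha,\beta\}\ge b=\min\{\alpha,\beta\}$, $c=\max\{\gamma,\delta\}\ge d=\min\{\gamma,\delta\}$, $\kappa_1=|x-y|^{-2}$ and $\kappa_2=|x-\sigma_H y|^{-2}$, the contribution of the orbit after polarization equals $\max\{\kappa_1,\kappa_2\}(|a-c|^{p}+|b-d|^{p})+\min\{\kappa_1,\kappa_2\}(|a-d|^{p}+|b-c|^{p})$, so the claim reduces to the elementary inequality
\begin{equation}\notag
\begin{split}
&\max\{\kappa_1,\kappa_2\}\,(|a-c|^{p}+|b-d|^{p})+\min\{\kappa_1,\kappa_2\}\,(|a-d|^{p}+|b-c|^{p})\\
&\qquad\le\ \kappa_1\,(|\alpha-\gamma|^{p}+|\beta-\delta|^{p})+\kappa_2\,(|\alpha-\delta|^{p}+|\beta-\gamma|^{p}).
\end{split}
\end{equation}
This follows from the rearrangement inequality $|a-c|^{p}+|b-d|^{p}\le|a-d|^{p}+|b-c|^{p}$ (valid when $a\ge b$, $c\ge d$ for the convex nondecreasing function $t\mapsto t^{p}$, $p>1$) together with a Chebyshev--type pairing of the weights, the point being that polarization couples the ``aligned'' pairing $(|a-c|,|b-d|)$ --- the one with the smaller $p$--th power sum --- to the larger of the two weights. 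Integrating this pointwise inequality in $x$ and $y$ gives $[u^H]^{p}_{\wsp}\le[u]^{p}_{\wsp}$.

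To finish, I would invoke the classical approximation of symmetric rearrangement by polarizations: there is a sequence of half--lines $(H_k)_{k\ge1}$ for which the iterates $u_1:=u^{H_1}$, $u_{k+1}:=u_k^{H_{k+1}}$ converge to $u^{*}$ in $L^{p}(\R)$, hence a.e.\ along a subsequence. Since $[u_k]^{p}_{\wsp}\le[u]^{p}_{\wsp}$ for all $k$ and $(x,y)\mapsto|u_k(x)-u_k(y)|^{p}|x-y|^{-2}\to|u^{*}(x)-u^{*}(y)|^{p}|x-y|^{-2}$ a.e., Fatou's lemma yields $[u^{*}]^{p}_{\wsp}\le\liminf_k[u_k]^{p}_{\wsp}\le[u]^{p}_{\wsp}$, which is the assertion.

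The only genuinely non--trivial ingredient is the four--point inequality of the middle paragraph: this is where both the convexity of $t\mapsto t^{p}$ and the monotonicity of the kernel $r\mapsto r^{-2}$ (encoded in the ordering of $\kappa_1$ and $\kappa_2$) are used, and although elementary it requires a short case analysis on the mutual order of $\alpha,\beta,\gamma,\delta$ and on which orbit--distance is the shorter. Everything else is soft: equimeasurability of polarizations, the $L^{p}$--convergence of iterated polarizations to $u^{*}$, and lower semicontinuity via Fatou (see e.g.\ \cite{Almgren.Lieb} and the references therein). An alternative avoiding polarizations would use a coarea/layer--cake representation of the seminorm in terms of the level sets $\{u>t\}$ and the fractional isoperimetric inequality $P_s(E^{*})\le P_s(E)$ with $P_s(E)=\int_\R\int_\R|\chi_E(x)-\chi_E(y)|\,|x-y|^{-2}\,dx\,dy$, itself a consequence of the Riesz rearrangement inequality applied to a truncated kernel; I would nonetheless prefer the polarization route as the more robust and uniform in $p$.
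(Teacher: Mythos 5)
Your proposal necessarily takes a different route from the paper, because the paper offers no proof at all here: it simply cites \cite[Theorem 9.2]{Almgren.Lieb}, whose argument proceeds via a layer-cake decomposition of the convex integrand $t\mapsto t^{p}$ and of the decreasing kernel, reducing the claim to the Riesz rearrangement inequality for characteristic functions. You instead run the polarization (two-point symmetrization) route, and your argument is correct: the reduction to $u\ge 0$, the orbit decomposition of the double integral over $\{x,\sigma_H x\}\times\{y,\sigma_H y\}$ with $x,y$ in a fundamental domain, the pointwise four-point inequality, the $L^{p}$ approximation of $u^{*}$ by iterated polarizations, and the closing Fatou step all hold. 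Two small points worth making explicit. First, for $x,y\in H$ one automatically has $|x-y|\le|x-\sigma_H y|$, so $\kappa_1=\max\{\kappa_1,\kappa_2\}$ with no case distinction needed. Second, the four-point inequality is cleanest seen as follows: as multisets $\{|a-c|^{p}+|b-d|^{p},\ |a-d|^{p}+|b-c|^{p}\}=\{|\alpha-\gamma|^{p}+|\beta-\delta|^{p},\ |\alpha-\delta|^{p}+|\beta-\gamma|^{p}\}$, and $|a-c|^{p}+|b-d|^{p}$ is the smaller of the two by convexity of $t\mapsto t^{p}$ together with $a\ge b$, $c\ge d$; since $\kappa_1\ge\kappa_2$, attaching the smaller sum to the larger weight minimizes the weighted total, which is exactly what polarization achieves. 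As for trade-offs: the Almgren--Lieb route also delivers continuity properties of the rearrangement map, while the polarization route is more elementary, isolates the only two analytic ingredients (convexity of $t^{p}$ and monotonicity of $r\mapsto r^{-2}$), and extends verbatim to any radially nonincreasing kernel.
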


Now given $u\in \wsp$, from Proposition \ref{propu*} we get
$$\int_{\mathbb{R}}\Phi(\alpha (|u|)^{\frac{1}{1-s}}) \, dx =\int_{\mathbb{R}}\Phi(\alpha (|u|^*)^{\frac{1}{1-s}})\, dx,\quad \| |u|^*\|_{L^p}=\|u\|_{L^p},$$
and according to Theorem \ref{pol}
$$\||u|^*\|_{\wsp}^p=\||u|^*\|_{L^p(\R{})}^p+[|u|^*]_{\wsp}^p\leq \|u\|_{L^p(\R{})}^p+[u]_{\wsp}^p= \|u\|_{\wsp}^p.$$
Therefore in the rest of the proof of \eqref{eq:MTR} we may assume that $u\in \wsp$ is even, non-increasing on $[0,\infty)$, and $\|u\|_{\wsp}\le 1$. We will use a technique by Ruf \cite{Ruf-H1} (see also \cite{IMM}) and write
\begin{equation}\notag
\begin{split}
&\int_{\mathbb{R}}\Phi(\alpha (|u|)^{\frac{1}{1-s}})\, dx \\
=&\int_{I^c}\Phi(\alpha (|u|)^{\frac{1}{1-s}})\, dx+\int_{I}\Phi(\alpha (|u|)^{\frac{1}{1-s}})\,  dx\\
=&:(I)+(II),
\end{split}
\end{equation}
where $I=(-r_0,r_0)$, with $r_0>0$ to be chosen.
Notice that since $u$ is even and non-increasing, for $x\neq0$ and $p>1$, we have
\begin{equation}\label{eq:utile}
|u(x)|^p\leq\frac{1}{2|x|}\int_{-|x|}^{|x|}|u(y)|^p\, dy\leq \frac{\|u\|^{p}_{L^{p}}}{2|x|}.
\end{equation}

We start by bounding $(I)$.  We observe that for $r_0>>1$, we have $|u(x)|\leq 1$ on $I^c$ and hence
$$ |u|^{\frac{p\lceil p-1\rceil}{p-1}}\leq |u|^p\quad \text{on $I^c$,} $$
since $\frac{p\lceil p-1\rceil}{p-1}\geq p$. 
For $k>p-1$ we bound
\begin{equation}\notag
\int_{I^c}(|u|^{p})^{\frac{k}{p-1}}\, dx\leq \int_{I^c} \left(\frac{\|u\|_{L^p}^{p}}{2|x|}\right)^{\frac{k}{p-1}}=\frac{\|u\|_{L^p}^{\frac{pk}{p-1}}r_0^{1-\frac{k}{p-1}}(p-1)}{2^{\frac{k}{p-1}}(k+1-p)}.
\end{equation}

Hence

\begin{equation}\notag
\begin{split}
(I)&=\sum_{k=\lceil p-1\rceil}^{\infty} \int_{I^c} \frac{\alpha^{k}}{k!} |u|^{\frac{kp}{p-1}}\, dx\\
&= \frac{\alpha^{\lceil p-1\rceil}}{\lceil p-1\rceil!}\int_{I^c} |u|^{\frac{p\lceil p-1\rceil}{p-1}}\, dx+\sum_{k=\lceil p\rceil}^{\infty} \int_{I^c} \alpha^{k} \frac{|u|^{\frac{kp}{p-1}}}{k!}dx\\
&\leq C(\alpha,p)\|u\|^p_{L^p}+r_0 (p-1)\sum_{k=\lceil p\rceil}^{\infty}\frac{\alpha^k\left(\|u\|_{L^p}^p\right)^{\frac{k}{p-1}}}{k!(k+1-p)(2r_0)^{\frac{k}{p-1}}}\\
&\leq C(\alpha,p)\|u\|^p_{L^p}+C\sum_{k=\lceil p\rceil}^{\infty}\left(\frac{\alpha}{(2r_0)^{p-1}}\right)^{k}\frac{1}{k!(k+1-p)}\leq C.
\end{split}
\end{equation}
As for $(II)$, define $v\in\wspi$ as follows
\begin{equation}\notag
v(x)=\begin{cases}
u(x)-u(r_0)\quad &|x|\leq r_0\\
0\quad &|x|>r_0.
\end{cases}
\end{equation}
Let $x\in I$. We compute using the monotonicity of $u$ %using Proposition \ref{prop:norm radial}

\begin{equation}\label{eq:dentro}
\begin{split}
&\int_0^\infty|v(x)-v(y)|^p\frac{x^2+y^2}{(x^2-y^2)^2}\, dy\leq \int_0^\infty|u(x)-u(y)|^p\frac{x^2+y^2}{(x^2-y^2)^2}\, dy.
\end{split}
\end{equation}
Let $x\in I^c$. We have
\begin{equation}\label{eq:fuori}
\begin{split}
&\int_0^\infty |v(x)-v(y)|^p\frac{x^2+y^2}{(x^2-y^2)^2}\, dy\\
&=\int_I |u(r_0)-u(y)|^p\frac{x^2+y^2}{(x^2-y^2)^2}\, dy\\
&\leq \int_I |u(x)-u(y)|^p\frac{x^2+y^2}{(x^2-y^2)^2}\, dy.
\end{split}
\end{equation}
Combining \eqref{eq:dentro}, \eqref{eq:fuori} and integrating in $x$, we get
\begin{equation}\label{eq:seminorma}
[v]^p\leq [u]^p.
\end{equation}

Using the definition of $v$ and the inequality $(a+b)^\sigma\leq a^\sigma+\sigma 2^{\sigma-1}(a^{\sigma-1}b+b^\sigma)$ for $a,b\geq0$ and $\sigma\geq 1$, we have
\begin{equation}\label{eq:allafine}
\begin{split}
u^{\frac{1}{1-s}}&\leq v^{\frac{1}{1-s}}+\frac{1}{1-s}2^{\frac{s}{1-s}}(v^{\frac{s}{1-s}} u(r_0)+ u(r_0)^{\frac{1}{1-s}})\\
&\leq v^{\frac{1}{1-s}}\left(1+\frac{2^{\frac{2s-1}{1-s}}}{p r_0(1-s)}||u||_p^p\right)+2^{\frac{s}{1-s}}+\frac{2^{\frac{s}{1-s}}}{1-s}{r_0}\\
&= v^{\frac{1}{1-s}}\left(1+\frac{2^{\frac{2s-1}{1-s}}}{p r_0(1-s)}||u||_p^p\right)+C(r_0).
\end{split}
\end{equation}
This implies
\begin{equation}\notag
\begin{split}
u(x)&\leq v(x)\left(1+\frac{2^{\frac{2s-1}{1-s}}}{p r_0(1-s)}||u||_p^p\right)^{1-s}+C^{1-s}(r_0)\\
&:=w(x)+C^{1-s}(r_0).
\end{split}\end{equation}

From \eqref{eq:seminorma} and the definition of $w$, we get
\begin{equation}
\begin{split}
[w]^p&=[v]^p\left(1+\frac{2^{\frac{2s-1}{1-s}}}{p r_0(1-s)}||u||_p^p\right)^{\frac{1-s}{s}}\\
&\leq\left(1-||u||_p^p\right)\left(1+\frac{2^{\frac{2s-1}{1-s}}}{p r_0(1-s)}||u||_p^p\right)^{\frac{1-s}{s}}
\end{split}
\end{equation}

Consider now the function $f(t)=(1-t)(1+\tau t)^\sigma$, where $\tau:=\frac{2^{\frac{2s-1}{1-s}}}{p r_0(1-s)}$ and $\sigma=\frac{1-s}{s}>0$. %Notice that $f(1)=0$ and $f(0)=1$. 
We compute 
\begin{equation}
f'(t)=(1+\tau t)^{\sigma-1}\left(\tau t(-\sigma-1)+\tau\sigma-1 \right)
\end{equation}
which vanishes for $t_1=-\frac 1\tau<0$ and $t_2=\frac{\tau\sigma-1}{\tau(\sigma+1)}$. We choose now $r_0>2^{\frac{2s-1}{1-s}}$ so that $t_2<0$. This implies that $f$ is decreasing in $(0,1)$ and since $f(0)=1$ we have that $f(t)<1$ for $t\in(0,1)$, which implies
\begin{equation}\label{eq:okok}
[w]^p\leq 1.
\end{equation} 
%since $\|u\|^p_p\leq 1$.
We can apply now Proposition \ref{inequality} on the interval $I=(-r_0,r_0)$ to get that there exists $\alpha_*>0$ such that
\begin{equation}
\int_I e^{\alpha_* w^{p'}}\, dx\leq C
\end{equation}
and using \eqref{eq:allafine} we get

\begin{equation}
\int_I e^{\alpha_* u^{\frac{1}{1-s}}}\, dx\leq C\int_I e^{\alpha_* w^{\frac{1}{1-s}}}\, dx\leq C,
\end{equation}
concluding the proof of \eqref{eq:MTR}.

\medskip

To prove the second part of the claim one can argue as in the previous section, using the sequence of functions $u_\ve$ defined in \eqref{eq:moserfunctions} and taking into account that now the norm we are working with is the full $W^{s,p}$-norm. Indeed we have 
\begin{equation}\label{veloce}
\|u_\ve\|^p_{L^p}=\int_\R |u_\ve|^p\, dx=\int_{|x|\leq\ve}\left(|\log\ve|^{p-sp}\right)\, dx+\int_{\ve<|x|<1}\frac{|\log x|}{|\log\ve|^{sp}}\, dx=O(|\log\ve|^{-1}).
\end{equation}
Hence from \eqref{eq:limsemi}, it follows that 
\begin{equation}\label{eq:fullnorm}
\lim_{\ve\to0}\|u_\ve\|_{\wsp}^p=\gamma_s.
\end{equation}
Choose $M>0$ large enough so that
$$ \Phi(t)\geq \frac 12 e^t,\quad t\geq M. $$
Then one has
\begin{equation}\label{ultima}
\begin{split}
\int_\R \Phi\left(\gamma_s^s \frac{u_\ve}{\|u_\ve\|_{\wsp}}^{\frac{1}{1-s}}\right)\, dx &\geq \int_{u_\ve\geq M}\Phi\left(\gamma_s^s \frac{u_\ve}{\|u_\ve\|_{\wsp}}^{\frac{1}{1-s}}\right)\, dx\\
&\geq \frac 12\int_{-\ve}^{\ve}e^{\left(\gamma_s^s\frac{u_\ve}{\|u_\ve\|_{\wsp}}\right)^{\frac{1}{1-s}}}\, dx.
\end{split}
\end{equation}
for $\ve$ small enough. Now, thanks to \eqref{eq:fullnorm}, one can argue as in the proof of Proposition \ref{mainprop} to conclude the proof of Theorem \ref{inequalityR}.
%\hfill$\square$

%%%%%%%%%%%%%%%%%%%%%%%%%%%%%%%%%%%%
\section{Proof of Theorem \ref{sharpinterval}}

We will start by proving \eqref{eq:MTf} since the proof of \eqref{eq:MTfr} will follow adapting the reasoning of the previous section.

\medskip

Let $u_\ve$ be as in \eqref{eq:moserfunctions}.
To prove \eqref{eq:MTf} it is enough to show that there exists a constant $\delta>0$ such that
\begin{equation}\notag
\int_{-\ve}^{\ve} e^{\alpha^*\left(\frac{u_\ve}{[u_\ve]}\right)^{\frac{1}{1-s}}}\, dx\geq\delta.
\end{equation}
Indeed, $u_\ve\to+\infty$ uniformly for $|x|<\ve$ as $\ve\to0$ and we have
\begin{equation}\notag
\sup_{u\in\wspi,[u]_{\wsp}\leq1}\int_I f(|u|)e^{\alpha^* \left(\frac{|u|}{[u]}\right)^{\frac{1}{1-s}}}\, dx\geq \inf_{|x|<\ve}f(|u_\ve|) \int_{-\ve}^{\ve}e^{\alpha^* \left(\frac{|u_\ve|}{[u_\ve]}\right)^{\frac{1}{1-s}}}\, dx.
\end{equation}
From Proposition \ref{const}, it follows that 
\begin{equation}
\lim_{\ve\to0}\frac{[u_\ve]}{\gamma_s^{s}}=1
\end{equation}
and in particular
\begin{equation}\notag
\lim_{\ve\to0}[u_\ve]^p=8\int_1^{+\infty}|\log x|^p\frac{x^2+1}{(x^2-1)^2}\, dx=\gamma_s.
\end{equation}
We compute
\begin{equation}\label{veloce2}
\lim_{\ve\to0}\log\frac{1}{\ve}\left([u_\ve]^p-\gamma_s\right)=8\lim_{\ve\to0}\log\frac{1}{\ve}\int_{\frac{1}{\ve}}^{+\infty}|\log x|^p\frac{x^2+1}{(x^2-1)^2}\, dx=0. 
\end{equation}
Then we can write
\begin{equation}\label{eq:buona}
\frac{[u_\ve]^p}{\gamma_s}\leq1+(C\log\frac{1}{\ve})^{-1}
\end{equation}
and in particular, recalling
\begin{equation}\notag
\lim_{t\to+\infty} \frac{t}{(1+\frac Ct)^{\frac{1}{1-s}}}-t=-\frac{1}{1-s},
\end{equation}
we have
\begin{equation}\label{39}
\begin{split}
&\int_{-\ve}^{\ve}e^{\gamma_s^{\frac{s}{1-s}} \left(\frac{|u_\ve|}{[u_\ve]}\right)^{\frac{1}{1-s}}}\, dx=\int_{-\ve}^{\ve}e^{\left(\frac{\gamma_s^{s}}{[u_{\ve}]}\right)^{\frac{1}{1-s}} |u_\ve|^{\frac{1}{1-s}}}\, dx\\
&\geq\int_{-\ve}^{\ve} e^{\frac{\log\frac{1}{\ve}}{(1+C(\log\frac{1}{\ve})^{-1})^{\frac{1}{1-s}}}}\, dx\\
&=2\ve e^{\frac{\log\frac{1}{\ve}}{(1+C(\log\frac{1}{\ve})^{-1})^{\frac{1}{1-s}}}}\to e^{-\frac{1}{1-s}}
\end{split}
\end{equation}
as $\ve\to0$.
Therefore
\begin{equation}\label{39 2}
\int_Ie^{\gamma_s^{\frac{s}{1-s}} \left(\frac{|u_\ve|}{[u_\ve]}\right)^{\frac{1}{1-s}}}\, dx\geq \delta
\end{equation}
for some $\delta>0$, proving \eqref{eq:MTf}. We shall now prove \eqref{eq:MTfr}. From \eqref{veloce} and \eqref{veloce2} it follows that 
\begin{equation}\label{ueps}
\frac{\|u_\ve\|_{\wsp}^p}{\gamma_s}\leq 1+O(|\log\ve|^{-1}).
\end{equation}

Now using \eqref{ultima} and arguing as in \eqref{39} and \eqref{39 2}, we conclude the proof.

%\hfill$\square$

\bibliographystyle{plain}
\bibliography{bib2}

\end{document}